\newtheorem{theorem}{Theorem}
\theoremstyle{definition}
\newtheorem{corollary}[theorem]{Corollary}
\newtheorem{definition}[theorem]{Definition}
\title{Algorithmic Search in Group Theory\\{\bigskip\large Dedicated to the memory of Charles Sims}}
\author{Robert Gilman}
\author{Robert Gilman\thanks{The author thanks the Hausdorff Institute of Mathematics, the University of Newcastle and the University of Warwick for their hospitality while this paper was being written.}}
\date{\today}
\begin{document}

\maketitle

\begin{abstract} A method of random search based on Kolmogorov complexity is proposed and applied to two search problems in group theory. The method is provably effective but not practical, so the applications involve heuristic approximations. Perhaps surprisingly, these approximations seem to work. Some experimental evidence is presented.
\end{abstract}

\section{Introduction}\label{sec:intro}

One of Charlie Sims' substantial contributions to mathematics is his invention of a base and strong generating set for finite permutation groups. This invention played a crucial role in proving the existence of several sporadic finite simple groups and is the foundation of most permutation group algorithms in use today, including those used here.

The origin of this paper is a conversation some years ago between Colva Roney-Dougal and the author about the difficulty of generating random permutation groups with which to test the efficacy of various permutation group algorithms. A common way of sampling random subgroups is to choose generators at random from the ambient group. This approach fails for permutation groups because, as is well known, a random pair of permutations from the symmetric group of degree $n$ generates the symmetric or alternating group of degree $n$ with probability about $1-1/n$. We propose a different method of search which seems to do better. 

We recast the above search problem in the following general form. Given an infinite decidable subset $Y$ of a computably enumerable set $T$, search $T$ for multiple instances of $Y$. A common strategy is to decompose $T$ in some convenient way as a union of finite subsets $T=\cup T_n$, choose $n$ large, and test random elements of $T_n$ for membership in $Y$. However the search will be hard if instances of $X$ are rare, in particular if $X$ has asymptotic density 0 with respect to the decomposition of $T$:
\[lim_{n\to \infty} \frac{|X\cap T_n|}{|T_n|} = 0\]
where $|\cdot|$ denotes cardinality.

This obstacle can be avoided by choosing the decomposition $\{T_n\}$ in a special way. Let $\Sigma^*$ be the set of all words over a finite alphabet, $\Sigma$, with at least two letters; and define $C_n$ to be the finite set of all words $w$ whose Kolmogorov complexity, $C(w)$, is at most $n$.

\begin{theorem}\label{thm:main} If $X\subset \Sigma^*$ is an infinite decidable subset, then 
\[ \liminf_{n\to\infty}\frac{|X\cap C_n|}{|C_n|} > 0. \]
in other words $X$ has positive lower asymptotic density with respect to the decomposition $\Sigma^*=\cup C_n$.
\end{theorem}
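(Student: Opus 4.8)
The plan is to estimate $|X\cap C_n|$ from below and $|C_n|$ from above, and to check that both are of order $2^{n}$ (more precisely $b^{n}$, where $b$ is the size of the alphabet in which programs for the fixed universal machine $U$ are written). The ratio is then bounded away from $0$ by a constant depending only on $X$ and $U$, which is exactly the assertion.

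For the upper bound on $|C_n|$: since $U$ is a (partial) function, distinct words have distinct shortest programs, so $w\mapsto(\text{a shortest program for }w)$ embeds $C_n$ into the set of programs of length at most $n$. There are fewer than $2^{\,n+1}$ of those (taking $b=2$ for concreteness), hence $|C_n|<2^{\,n+1}$. Note that no lower bound on $|C_n|$ is needed.

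For the lower bound on $|X\cap C_n|$, which is the heart of the matter: because $X$ is decidable and infinite (computable enumerability would already suffice), we may enumerate its elements without repetition as $x_{(1)},x_{(2)},x_{(3)},\dots$. I would build one machine $M$ which, on input the binary numeral for $i$, runs the decision procedure for $X$ along a standard enumeration of $\Sigma^{*}$ until it has encountered $i$ members of $X$ and then prints the $i$-th, so that $M$ outputs $x_{(i)}$. The invariance theorem then furnishes a constant $c$ --- depending on $M$, hence on $X$ and $U$, but crucially not on $i$ --- with
\[
C\!\left(x_{(i)}\right)\ \le\ \bigl|\text{numeral for }i\bigr|+c\ \le\ \log_2 i + c .
\]
Therefore every $x_{(i)}$ with $i\le 2^{\,n-c}$ lies in $C_n$, and so $|X\cap C_n|\ \ge\ 2^{\,n-c}$ for all $n\ge c$.

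Combining the two bounds, for all $n\ge c$,
\[
\frac{|X\cap C_n|}{|C_n|}\ \ge\ \frac{2^{\,n-c}}{2^{\,n+1}}\ =\ 2^{-c-1}\ >\ 0 ,
\]
so the lower density is at least $2^{-c-1}$, proving the theorem. The step I expect to require the most care is the construction of $M$: one must make sure the overhead $c$ is genuinely uniform in $i$ (it is, because the decision procedure for $X$ is hard-wired into the single machine $M$ and only the numeral for $i$ varies). One should also note that the bound $C(i)\le\log_2 i+O(1)$ is a feature of \emph{plain} Kolmogorov complexity and has no counterpart for prefix complexity, where the analogous computation would only yield a ratio of order $1/n^{2}$; so the use of $C$ rather than $K$ in the statement is essential to this argument.
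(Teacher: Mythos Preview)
Your proof is correct and follows essentially the same route as the paper: an upper bound $|C_n|\le |\Sigma|^{n+1}$ by counting descriptions, and a lower bound on $|X\cap C_n|$ obtained from a computable bijection $\Sigma^*\to X$ (the paper simply invokes such a bijection $f$ and applies $C(f(w))\le C(w)+c_f$ together with $C(w)\le |w|+c$, while you construct $f$ explicitly as the enumeration $i\mapsto x_{(i)}$ and invoke invariance directly). The two arguments differ only in packaging, and your side remarks on prefix complexity and on computable enumerability sufficing are accurate bonuses.
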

Theorem~\ref{thm:main} is proved in Section~\ref{sec:proof}.
\begin{corollary}\label{thm:corollary} 
Let $\iota:\Sigma^*\to T$ be a computable bijection. If $Y\subset T$ is an infinite decidable subset, then $Y$ has positive lower asymptotic density with respect to the decomposition $T = \cup\iota(C_n)$.
\end{corollary}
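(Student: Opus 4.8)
The plan is to deduce the corollary directly from Theorem~\ref{thm:main} by transporting the problem back to $\Sigma^*$ along $\iota$. Set $X := \iota\inv(Y) = \{\, w \in \Sigma^* : \iota(w) \in Y \,\}$. The first thing to verify is that $X$ satisfies the hypotheses of Theorem~\ref{thm:main}. Since $\iota$ is a bijection, it restricts to a bijection $X \to Y$, so $X$ is infinite because $Y$ is. For decidability, note that on input $w \in \Sigma^*$ one may compute $\iota(w)$ (as $\iota$ is computable), and since every value of $\iota$ already lies in $T$ one may then run the given decision procedure for $Y$ as a subset of $T$; the two steps composed decide membership in $X$. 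In particular no appeal to computability of $\iota\inv$ is required, even though that also holds because $\iota$ is a computable bijection onto a computably enumerable set.

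The second step is bookkeeping about cardinalities. Because $\iota$ is injective, $|\iota(C_n)| = |C_n|$ for every $n$; and because $\iota$ is a bijection carrying $X$ onto $Y$, we have $\iota(C_n) \cap Y = \iota(C_n \cap X)$, hence $|\iota(C_n) \cap Y| = |C_n \cap X|$. Therefore
\[ \frac{|Y \cap \iota(C_n)|}{|\iota(C_n)|} = \frac{|X \cap C_n|}{|C_n|} \]
for all $n$, so the two sequences have the same $\liminf$. One should also record that $\bigcup_n \iota(C_n) = \iota\bigl(\bigcup_n C_n\bigr) = \iota(\Sigma^*) = T$ and that each $\iota(C_n)$ is finite, so $T = \bigcup \iota(C_n)$ really is a decomposition of $T$ into finite sets with respect to which lower asymptotic density is defined.

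Combining the two steps, Theorem~\ref{thm:main} applied to $X$ gives $\liminf_n |X \cap C_n|/|C_n| > 0$, and by the displayed identity this equals $\liminf_n |Y \cap \iota(C_n)|/|\iota(C_n)|$, which is the assertion of the corollary. I do not expect a genuine obstacle here; the only point that warrants a moment's care is the decidability of $X$, where one must resist inverting $\iota$ and instead observe that precomposing the given decision procedure for $Y$ with the computable map $\iota$ already suffices.
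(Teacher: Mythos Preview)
Your proof is correct and follows exactly the paper's approach: the paper simply notes that the corollary follows from Theorem~\ref{thm:main} with $X=\iota^{-1}(Y)$, and you have filled in the routine verifications (infinitude and decidability of $X$, and the cardinality bookkeeping showing the density ratios coincide under the bijection $\iota$).
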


Corollary~\ref{thm:corollary}, which follows from Theorem~\ref{thm:main} with $X=\iota^{-1}(Y)$, shows that searching for elements of $X$ by choosing words $w$ uniformly at random from $C_n$ and testing $\iota(w)$ for membership in $X$ succeeds with probability bounded away from $0$ for large enough $n$. We call this method algorithmic search.

Unfortunately the sets $C_n$ are intractable. If we could decide membership in $C_n$, then we could decide membership in $C_n-C_{n-1}$ and thereby compute $C(w)$, which is uncomputable~\cite[Theorem 2.3.2]{MR2494387}. Even more to the point there is no computable upper bound on the size of the largest word in $C_n$ as a function of $n$~\cite[Theorem 2.3.1]{MR2494387}. Restricting ourselves to resource bounded complexity~\cite[Chapter 7]{MR2494387} resolves the computability issues, but does not help much when it comes to practical computation. Instead in Section~\ref{sec:implementation} we use Theorem~\ref{thm:main} as a pattern for heuristic searches.

\section{Algorithmic search}\label{sec:algorithmic}

We require a few elementary results from the theory of Kolmogorov complexity. Since applications of Kolmogorov compexity to group theory are rare (we know of only~\cite{MR806660, MR2182705, MR3705849}), we sketch proofs. For a more complete introduction to the theory, the reader is referred to~\cite{MR2494387} and~\cite{MR3702040}.

\subsection{Kolmogorov complexity}\label{sec:kolmogorov}

As before fix an alphabet $\Sigma$ with $|\Sigma|\ge 2$, and let $\Sigma^*$ denote the free monoid of all words over $\Sigma$. It is customary to use $\Sigma=\{0,1\}$, but larger alphabets are convenient when working with finitely generated groups. $\Sigma^n$ is the set of words of length $n$, and $\Sigma^{\le n}$ is the set of words of length at most $n$. $\varepsilon$ is the empty word. 

The Kolmogorov complexity of a word $w\in \Sigma^*$ is the length of the shortest description of $w$. Since there relatively few  short descriptions, most words are in incompressible; that is, their complexity is not much less than their length. Incompressibility can be taken as a definition of randomness.

Descriptions of words are constructed from computer programs. Let $\mathcal L$ be a Turing complete programming language over an alphabet $\Delta$. We want to use programs in $\mathcal L$ to compute functions $\Sigma^*\to\Sigma^*$, so it is natural to assume $\Sigma\subset \Delta$. Programs in $\mathcal L$ can be coded as words over $\Sigma$ in the following way: code the letters of $\Delta$ as words of some fixed length $\ell$ over $\Sigma$, and choose $\ell$ large enough to allow an extra reserved word of length $\ell$ which signals the end of a program. Programs become $\ell$ times larger than  before, but that does not bother us. The important point is that they are words over $\Sigma$ with the reserved word as a suffix, and they can be easily decoded into their original form.

\begin{definition} A description of $w\in \Sigma^*$ is a word in $\Sigma^*$ of the form $pv$ where $p$ is a program, $v$ is a word, and $p$ with input $v$ computes $w$. The length of a shortest description of $w$ is $C(w)$, the Kolmogorov complexity of $w$.
\end{definition}
 
Our conditions imply that an arbitrary word can be written as a description $pv$ in at most one way.
 
\begin{theorem}\label{thm:complexity} The following conditions hold.
\begin{enumerate}
\item\label{item:i)} $C(w)\le |w|+c$ for some constant $c$.
\item\label{item:ii} There are at most $|\Sigma|^{n}$ words $w$ with $C(w) = n$ and at most  $|\Sigma|^{n+1}$ words with $C(w) \le n$.
\item\label{item:iii} If $f:\Sigma^*\to\Sigma^*$ is a computable function, then for any word $w$, $C(f(w))\le C(w)+ c_f$ where $c_f$ is a constant depending on $f$ but not on $w$.
\end{enumerate}
\end{theorem}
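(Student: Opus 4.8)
The plan is to verify the three items essentially by the standard counting and composition arguments from Kolmogorov complexity, adapted to the particular description convention set up above (a description of $w$ is a word $pv$ with $p$ a program terminated by the reserved word and $v$ a data word, and each word of $\Sigma^*$ decodes in at most one way).

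For item~(\ref{item:i)}, I would exhibit a single fixed program $p_0$ that reads its input $v$ and prints $v$ verbatim; then $p_0 w$ is a description of $w$, so $C(w)\le |p_0|+|w|$, and $c=|p_0|$ works. The only thing to check is that $p_0 w$ parses correctly as a description, which follows because $p_0$ ends with the reserved word and is thus recognized as the program part.

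For item~(\ref{item:ii}), I would count: a word $w$ with $C(w)=n$ is the output of some description of length exactly $n$; since each word of length $n$ decodes into a (program, data) pair in at most one way and hence determines at most one output, the number of such $w$ is at most $|\Sigma^{n}|=|\Sigma|^{n}$. Summing the bound $|\Sigma|^{k}$ over $k\le n$ gives at most $\sum_{k=0}^{n}|\Sigma|^{k} = \frac{|\Sigma|^{n+1}-1}{|\Sigma|-1}\le |\Sigma|^{n+1}$, using $|\Sigma|\ge 2$; that is the claimed bound for $C(w)\le n$.

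For item~(\ref{item:iii}), given a computable $f$ I would fix a program $q_f$ implementing the following: on input $v$, first simulate a shortest description of $w$ — but more cleanly, take a shortest description $pv$ of $w$ and let $q_f$ be a program that, on input $pv$, runs $p$ on $v$ to get $w$ and then applies (a fixed subroutine computing) $f$ to output $f(w)$. Then $q_f\,(pv)$ is a description of $f(w)$, so $C(f(w))\le |q_f|+|pv| = |q_f|+C(w)$, and $c_f=|q_f|$ depends only on $f$. Here I must be slightly careful that $q_f$ can locate the boundary between its own code and the supplied string $pv$, and that $pv$ — which has the reserved word in the interior as the suffix of $p$, not at its end — can still be fed through; this is handled by having $q_f$ itself end with the reserved word and treat everything after it as the composite input, then re-run the universal parsing on that input. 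The main obstacle, such as it is, is getting these parsing/concatenation details right so that the constructed words are genuinely descriptions under the stated convention; the counting in~(\ref{item:ii}) and the bound in~(\ref{item:i)}) are then immediate.
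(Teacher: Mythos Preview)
Your proposal is correct and follows essentially the same standard Kolmogorov-complexity arguments as the paper: the identity program for item~1, the counting of descriptions for item~2, and composing with a program for $f$ for item~3. The only cosmetic difference is in item~3, where you feed the entire shortest description $pv$ as input to a single fixed interpreter-plus-$f$ program $q_f$, whereas the paper instead splices $p$ together with a program $q$ for $f$ into a new program of length at most $|p|+c_f$ that takes $v$ as input; both are routine variants yielding the same bound, and your extra care about parsing the reserved word is appropriate but not something the paper dwells on.
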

\begin{proof}
Let $p$ be a program which outputs its input and halts. For every word $w$, $pw$ is a description of $w$. Thus the first assertion holds with $c = |p|$. For the second part observe that since descriptions are words over $\Sigma$ and each word is a description in at most one way, there are at most $|\Sigma|^n$ 
descriptions of length $n$. Finally let $q$ be a program which computes $f$, and let $pv$ be a shortest description of $w$. Programs $p$ and $q$ can be combined with some overhead into a program of length at most $|p| + c_f$ which computes $f(w)$ and halts. 
\end{proof}

\subsection{Proof of Theorem~\ref{thm:main}}\label{sec:proof}

\begin{proof}
By hypothesis there is a computable bijection $f:\Sigma^* \to X$. It 
follows from Theorem~\ref{thm:complexity} that $\Sigma^n\subset C_{n+c}$. 
Likewise $f(C_{n+c})\subset X\cap C_{n+c+c_f}$ and $|C_{n+c+c_f}|\le |\Sigma|
^{n+c+c_f+1}$. Consequently
\[
\frac{|X\cap C_{n+c+c_f}|}{|C_{n+c+c_f}|} \ge \frac{|\Sigma|^n}{|\Sigma|^{n+c+c_f+1}} = \frac{1}{|\Sigma|^{c+c_f+1}}
\]
\end{proof}

\section{Implementation}\label{sec:implementation}

The algorithmic search method described in Section~\ref{sec:algorithmic} is impractical because, as we know from Section~\ref{sec:intro}, the sets $C_n$ are intractable. This section is devoted to two heuristic variations, which we also call algorithmic search. Instead of choosing random words in $C_n$, we choose random short descriptions, and to facilitate this choice we restrict the programs allowed in descriptions. 

Our heuristic variations are preliminary; ease of programming was a primary consideration. Nevertheless the results seem encouraging. Computations were done with the Magma system~\cite{MR1484478}. Figure~\ref{fig:permutations} required 40 hours of CPU time on a decent laptop.

\subsection{Finitely generated groups}\label{sec:words}
Let $\Sigma^*\to G$ be a choice of semigroup generators for the infinite group 
$G$, and suppose we wish to choose random elements of $G$. In the case of finite groups the product replacement algorithm effectively approximates the uniform distribution on the group~\cite{MR1829489}, but there is no uniform distribution on an infinite group. In practice it seems reasonable 
that $\overline w$, the image in $G$ of $w\in\Sigma^*$, is close to 
random if for some large $n$, $w$ is chosen at uniformly at random from $\Sigma^{\le n}$. But then sets $X \subset \Sigma^*$ of asymptotic density $0$ with respect to the decomposition $\Sigma^*=\cup \Sigma^{\le n}$ are invisible. In particular $\overline w$ is never equal to $1$ in $G$~\cite[Theorem 5.7]{MR1981427}. The disadvantage for, say, debugging and testing algorithms is obvious. Algorithmic search seems to do better.

\begin{figure}[!hb]
\centerline{\includegraphics[scale=.6]{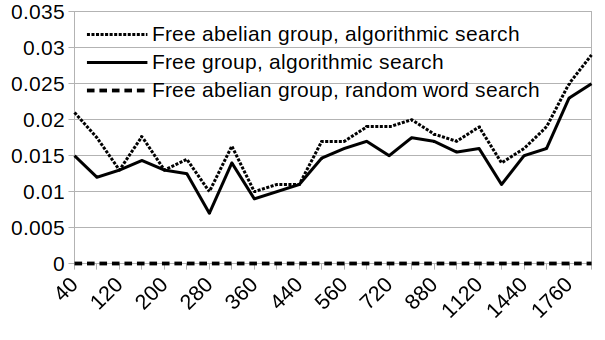}}
\caption{Observed frequency of words of various lengths defining the identity in the free group of rank 2 and the free abelian group of rank 2. Every data point for algorithmic search was computed from 1000 or more random descriptions, each of which was evaluated in both the free group and the free abelian group.  Data points for random word search were computed from over 1000 choices of random words in the generators. Random word search produced no words defining the identity.\label{fig:words}}
\end{figure}

For the algorithmic search used to produce Figure~\ref{fig:words}, descriptions have the form $pv$ as before, but $p$ is defined to be a sequence of monoid homomorphisms. Each homomorphism is given by listing the images of the letters in $\Sigma$ under that homomorphism. The word described by $pv$ is the image of $v$ under the composition of the homomorphisms. Our semigroup generators are $\Sigma=\{a,A,b,B\}$ where were are writing $A, B$ in place of the customary formal inverses $a^{-1}, b^{-1}$. For example the the description
\[ pv = \underbrace{ab,bB,aB,AA,}_{f}\underbrace{bb,BA, BB, AB,}_{g} abAA\]
describes the word $w=f\circ g(abAA) = aBaBaBaBaBbBaBaB$. Here we are not adhering strictly to the format from Section~\ref{sec:kolmogorov}.

$C'_{d,c,M}$ denotes the set of all descriptions $pv$ with $d$ homomorphisms, each specified by a tuple of words of length $c$, and with $|v|\le M$. Algorithmic search is performed by choosing random descriptions from $C'_{d,c,M}$ for various choices of the parameters, computing the words described, and testing them to see if they define the identity in the two groups from Figure~\ref{fig:words}.

\subsection{Permutation groups}\label{sec:permutations}

As mentioned above, pairs of permutations chosen at random from the symmetric group $S_n$ are unlikely to generate anything but $S_n$ or $A_n$. More precisely we have the following theorem.

\begin{theorem}[\cite{MR3391478}]\label{thm:Colva}
Two random permutations in $S_n$ generate a subgroup other than $S_n$ or $A_n$ with probability at most $\frac{1}{n} +\frac{8.8}{n^2}$. 
\end{theorem}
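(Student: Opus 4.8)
The plan is to partition the event that $G := \langle\sigma,\tau\rangle \notin \{A_n,S_n\}$ according to how $G$ acts on $\{1,\dots,n\}$: (a) $G$ is intransitive; (b) $G$ is transitive but imprimitive; (c) $G$ is primitive and properly contained in $S_n$ with $G \neq A_n$. The leading term $\tfrac1n$ will come entirely from case (a) --- indeed from the event that $\sigma$ and $\tau$ have a common fixed point --- while (b) and (c) turn out to be negligible and are absorbed by the $\tfrac{8.8}{n^2}$ together with the second-order part of (a). Only case (c) requires the classification of finite simple groups (CFSG), through the O'Nan--Scott theorem.

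For (a) I would union-bound over the proper nonempty subsets of $\{1,\dots,n\}$ that are invariant under both generators. A fixed $k$-subset is invariant under a uniform random permutation with probability $\binom nk^{-1}$, hence under both $\sigma$ and $\tau$ with probability $\binom nk^{-2}$; summing over the $\binom nk$ subsets of size $k$ and over $1\le k\le n/2$ (a set is invariant iff its complement is) gives
\[ \Pr\bigl(G\text{ intransitive}\bigr) \;\le\; \sum_{k=1}^{\lfloor n/2\rfloor}\binom nk^{-1} \;=\; \frac1n+O\!\left(\frac1{n^2}\right), \]
where the $k=1$ term is exactly $\tfrac1n$ and the tail $\sum_{k\ge 2}\binom nk^{-1}$ is dominated by $\binom n2^{-1}$ and is $O(n^{-2})$. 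Pinning down the stated constant uses the mildly sharper fact, via Bonferroni, that $\Pr(\sigma,\tau\text{ have a common fixed point})$ is in fact slightly below $\tfrac1n$; that refinement is routine bookkeeping.

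For (b) I would union-bound over block systems: for each factorisation $n=ab$ with $a,b>1$ there are $\frac{n!}{(a!)^b\,b!}$ partitions of $\{1,\dots,n\}$ into $b$ blocks of size $a$, each preserved by a random permutation with probability $\frac{(a!)^b\,b!}{n!}$ and by both $\sigma$ and $\tau$ with the square of that, so $\Pr(G\text{ imprimitive})\le\sum_{ab=n,\,a,b>1}\frac{(a!)^b\,b!}{n!}$, which is dominated by the cases $a=2$ and $b=2$ and is exponentially small in $n$. For (c) I would invoke Mar\'{o}ti's bound (a consequence of CFSG via O'Nan--Scott): a primitive subgroup of $S_n$ other than $A_n$ or $S_n$ has order at most roughly $n^{O(\log n)}$ --- or $n^{O(\sqrt n)}$ for the product-action families, with finitely many sporadic exceptions --- while the number of primitive maximal subgroups of $S_n$ is subexponential in $n$. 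Since each conjugacy class of a primitive subgroup $H$ contributes at most $\frac{n!}{|H|}\bigl(\frac{|H|}{n!}\bigr)^2=\frac{|H|}{n!}$ to $\Pr(G\text{ primitive proper})$, and $|H|/n!$ is super-exponentially small, this case contributes $o(n^{-k})$ for every fixed $k$. Summing (a), (b), (c) and checking the small values of $n$ (including $n=6$) directly then gives $\tfrac1n+\tfrac{8.8}{n^2}$.

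The hard part is case (c): it is the only step where deep machinery is unavoidable, but the probabilistic estimate there has enormous room to spare, so the real work is not the counting inequality itself but extracting from the O'Nan--Scott classification both the order bound and the (subexponential) count of primitive maximal subgroups, and disposing of the low-degree exceptions. The one genuinely delicate elementary point is squeezing the precise constant $8.8$ out of the second-order terms in case (a).
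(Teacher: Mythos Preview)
The paper does not prove this theorem: it is quoted with a citation and used only to supply the dashed upper bound in Figure~\ref{fig:permutations}. There is no proof in the paper to compare your proposal against.

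That said, your outline is the standard route to results of this type, going back to Dixon (1969): partition into the intransitive, transitive-imprimitive, and primitive-proper cases, with the leading $1/n$ coming from common fixed points in case~(a) and cases~(b) and~(c) contributing only lower-order terms. Your union bounds in (a) and (b) are correct, and (c) is indeed where CFSG enters through bounds on the orders and on the number of conjugacy classes of primitive maximal subgroups. The one place you are genuinely vague is the extraction of the explicit constant $8.8$: obtaining it requires a tighter second-order analysis of case~(a) (not just the crude $\sum_{k\ge 2}\binom{n}{k}^{-1}$ tail), together with the $o(n^{-2})$ contributions of (b) and (c) and direct verification for small $n$. That bookkeeping is essentially the content of the cited reference rather than something that falls out of your sketch.
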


In order to apply algorithmic search to the permutation group search problem from Section~\ref{sec:intro} we reformulate that problem slightly.  $S_\omega$ is the group of all permutations of $N=\{1,2,\ldots\}$ with finite support. $S_n$ acts on $\{1,\ldots, n\}$ in the usual way, and fixes all other elements of $N$. $\Sigma=\{0,1\}$ and $\iota:\Sigma^*\to S_\omega \times S_\omega$ is a computable bijection as in Corollary~\ref{thm:corollary}.  $Y\subset  S_\omega \times S_\omega$ is the collections of all of pairs of permutations which do not generate any $S_n$ listed above or its alternating group.

In the original formulation of the search problem, a pair of permutations from $S_n$ is ruled out if it generates $S_n$ or $A_n$. In the revision a pair from $S_\omega$ is ruled out if it generates any $S_n$ listed above or its alternating group. The bound of Theorem~\ref{thm:Colva} still applies. 

\begin{figure}[!ht]
\includegraphics[scale=.52]{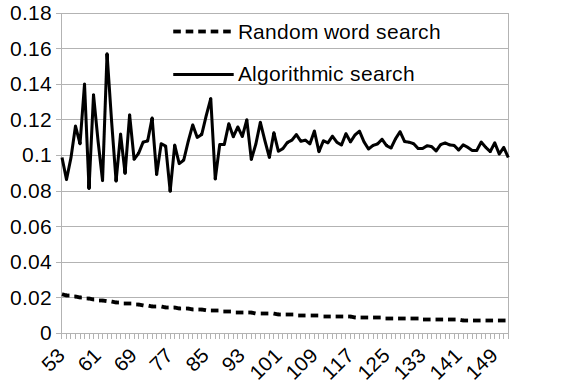}
\includegraphics[scale=.52]{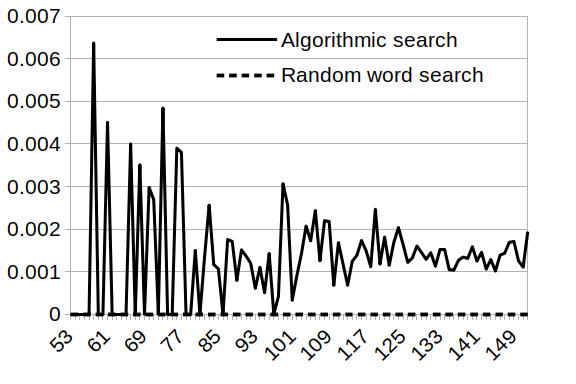}
\caption{Observed frequencies of 2-generator permutation groups which are not $S_n$ or $A_n$ (left) and which are solvable (right) for various permutation degrees. The dashed line on the left is the upper bound from Theorem~\ref{thm:Colva}. The dashed line on the right shows that random words in the generators never generated a solvable permutation group.\label{fig:permutations}}
\end{figure}

The map $\iota:\Sigma^*\to S_\omega \times S_\omega$ is a composition of  bijections $\Sigma^*\to N \to N\times N \to S_\omega \times S_\omega$.
$\Sigma^*\to N$ is the correspondence 
\[
\begin{array}{r|ccccccccc}
\Sigma^*&\varepsilon,  &0 &1 &00 &01 &10 &11 &000 &\cdots\\\hline
N         &1 &2  &3  &4  &5  &6  &7 &8  &\cdots
\end{array}
\]
while $N\to N\times N$ is a well known way of enumerating $N\times N$
\[
\begin{array}{r|ccccccccc}
N         &1 &2  &3  &4  &5  &6  &7 &8  &\cdots\\\hline
N\times N &(1,1) &(1,2)  &(2,1)  &(1,3)  &(2,2)  &(3,1)  & (1,4) &(2,3)  &\cdots
\end{array}
\]
and $N\times N \to S_\omega\times S_\omega$, is constructed from a standard enumeration of all permutations~\cite[Section 7.2.1.2]{MR3444818}.

Algorithmic search in this case resembles that of the preceding section except that instead of descriptions based on monoid homomorphisms from $\Sigma^*$ to $\Sigma^*$ we employ descriptions based on polynomial functions from $N$ to $N$. The polynomials have non-negative integer coefficients. For example the description 
\[\underbrace{8,2,3,1}_{p};\underbrace{6,7,4,2}_{q};15\]
with 2 degree 3 polynomials describes the integer
\[(8x^3+2x^2+3x+1) \circ (6x^3+7x^2+4x+2)(15) =  83879080636024\]
which gets mapped to the pair of permutations
\[ (1, 7, 8, 11, 10, 6, 2, 3, 4, 9, 5)\quad (1, 2, 6, 5, 7, 3, 4, 10, 11, 9, 8).\]

Figure~\ref{fig:permutations} shows results obtained by selecting 1,000,000 descriptions uniformly at random from the set of all descriptions with 7 degree 2 polynomials, $ax^2+bx + c$, satisfying $1\le a \le 20$, $0\le b,c \le 20$, and with $|v|\le 1000$. It appears that $S_n$ and $An$ are avoided about 10\% of the time and solvable permutation groups are obtained about $.1\%$ of the time. Whether or not results like these are useful for the permutation group search problem is not clear. In any case the method can probably be refined.

\bibliography{Sims}{}
\bibliographystyle{plain}

\end{document}